\title{Lipschitz bijections between boolean functions}
\author{Tom Johnston\thanks{Mathematical Institute, University of Oxford, Oxford, OX2 6GG, UK \texttt{\{thomas.johnston,scott\}@maths.ox.ac.uk}}\hphantom{ } and Alex Scott\footnotemark[1] \thanks{Supported by a Leverhulme Trust Research Fellowship.}}
\date{}
\newtheorem{theorem}{Theorem}
\newtheorem{lemma}[theorem]{Lemma}
\newtheorem{problem}{Problem}
\newtheorem{claim}{Claim}
\newtheorem{question}{Question}
\newtheorem{corollary}[theorem]{Corollary}
\DeclareMathOperator{\xor}{XOR}
\DeclareMathOperator{\maj}{Majority}
\DeclareMathOperator{\dic}{Dictator}
\DeclareMathOperator{\avgStretch}{avgStretch}
\DeclareMathOperator{\dist}{dist}
\DeclareMathOperator{\bin}{Bin}
\DeclareMathOperator{\dlip}{diss}
\DeclareMathOperator{\poly}{poly}
\newcommand{\lipsize}[1]{\|#1 \|_{{\rm Lip}}}
\newcommand{\floor}[1]{\left\lfloor #1 \right\rfloor}
\newcommand{\ceil}[1]{\left\lceil #1 \right\rceil}
\newcommand{\expec}[1]{\mathbb{E} \left[ #1 \right]}
\newcommand{\vara}{a}
\newcommand{\varb}{b}
\newcommand{\varc}{c}
\newcommand{\vard}{d}
\newcommand{\eps}{\epsilon}
\begin{document}
		\maketitle
		\begin{abstract}
			We answer four questions from a recent paper of Rao and Shinkar \cite{RAO2018} on Lipschitz bijections between functions from $\{0,1\}^n$ to $\{0,1\}$. 
			\begin{enumerate*}[label={(\arabic*)}]
				\item We show that there is no $O(1)$-bi-Lipschitz bijection from $\dic$ to $\xor$ such that each output bit depends on $O(1)$ input bits.
				\item We give a construction for a mapping from $\xor$ to $\maj$ which has  average stretch $O(\sqrt{n})$, matching a previously known lower bound.
				\item We give a $3$-Lipschitz embedding $\phi : \{0,1\}^n \to \{0,1\}^{2n+1}$ such that $\xor(x) = \maj(\phi(x))$ for all $x \in \{0,1\}^n$.
				\item We show that with high probability there is a $O(1)$-bi-Lipschitz mapping from $\dic$ to a uniformly random balanced function.
			\end{enumerate*}
		\end{abstract}
		\section{Introduction}	
		Given two boolean functions $f,g : \{0,1\}^n \to \{0,1\}$ we say that a bijection $\phi: \{0,1\}^n \to \{0,1\}^n$ is a \emph{mapping from $f$ to $g$} if, for every $x \in \{0,1\}^n$, we have $f(x) = g(\phi(x))$. The analysis of boolean functions, in particular their Fourier coefficients and noise stability, is widely-studied and has applications in many areas of mathematics including the theory of social choice, complexity theory and in the hardness of approximations (see for instance \cite{bourgain2002distribution,kindler2018gaussian, lovett2011bounded, Mossel2010, boppana1997average, bernasconi1998mathematical, dinur2005hardness, ben1989collective,kahn1988influence, benjamini1999noise}).
		A frequent theme in the literature is the analysis of the similarities and differences between boolean functions with different geometric or structural properties; for example, between functions such as $\dic$ that are
		determined by a small number of coordinates, and functions such as
		$\maj$ or $\xor$ that are not. One measure of similarity between functions is the existence of a Lipschitz mapping (with small constant) between them. In this paper we continue the study of Rao and Shinkar \cite{RAO2018} on Lipschitz mappings between the boolean functions $\dic, \xor$, $\maj$ and a uniformly random balanced function and answer several of the questions they pose.
				
		Write a point $x  \in \{0,1\}^n$ as $x = (x_1, \dots, x_n)$ and, for $\phi : \{0,1\}^n \to \{0,1\}^n$, write $\phi = \left( \phi_1, \dots, \phi_n \right)$. For $x, y \in \{0,1\}^n$, let $|x| = \sum_{i=1}^n x_i$ denote the Hamming weight of $x$ and $\dist(x,y) = \sum_{i=1}^n |x_i - y_i|$ the Hamming distance between $x$ and $y$.
		
		A bijection $\phi: \{0,1\}^n \to \{0,1\}^n$ is said to be \emph{$C$-Lipschitz} if, for all $x,y \in \{0,1\}^n$, $\dist(\phi(x), \phi(y)) \leq C \dist(x,y)$, and $\phi$ is said to be \emph{$C$-bi-Lipschitz} if both $\phi$ and $\phi^{-1}$ are $C$-Lipschitz. As a relaxation from being Lipschitz we define the \emph{average stretch} of a mapping $\phi: \{0,1\}^n \to \{0,1\}^n$ by \[ \avgStretch(\phi) = \mathbb{E}_{x,i}\left[ \dist(\phi(x), \phi(x+e_i))\right]\] where $x \in \{0,1\}^n$ and $i \in [n]$ are both chosen independently and uniformly at random.
		
		Given a bijection $\phi : \{0,1\}^n \to \{0,1\}^n$ we say that the $j$th output bit $\phi_j$ \emph{depends on} the $i$th input bit $x_i$ if there exists $x \in \{0,1\}^n$ such that $\phi_j(x) \neq \phi_j(x + e_i)$. If every output bit depends on at most $k$ input bits, we say the map $\phi$ is \emph{$k$-local}.
		
		In Sections \ref{sec:dictoxor} and \ref{sec:xortomaj} we study mappings between three boolean functions $\dic$, $\xor$ and $\maj$ which we define by
		\begin{itemize}
			\item $\dic(x) = x_1$,
			\item $\xor(x) = \sum_{i=1}^n x_i \mod 2$,
			\item $\maj(x) = 1$ if $|x| > n/2$ and $\maj(x) = 0$ otherwise.
		\end{itemize}
		In Section $\ref{sec:random}$ we consider a uniformly random balanced function $f : \{0,1\}^n \to \{0,1\}$, where we say a boolean function $g: \{0,1\}^n \to \{0,1\}$ is \emph{balanced} if $g^{-1}(1)$ and $g^{-1}(0)$ are of the same size. Clearly, both $\xor$ and $\dic$ are always balanced while $\maj$ is only balanced when $n$ is odd (and so a bijection from $\dic$ or $\xor$ to $\maj$ can only exist for odd $n$).

		\subsection{Bijections between $\dic$ and $\maj$}

		The functions $\dic$ and $\maj$ are in many ways opposites of each other. For example, the first coordinate clearly has a large influence on the value of $\dic$, while for $\maj$ every coordinate has the same small amount of influence. There are many results which show that functions which differ from $\maj$ in some way must have influential coordinates and are therefore similar to $\dic$ functions. For example, the ``Majority Is Stablest'' theorem of Mossel, O'Donnell and Oleszkiewicz in \cite{Mossel2010} shows that if a balanced boolean function is essentially more noise-stable than $\maj$, then it must have an influential coordinate.
		
		It is straightforward to see that no bijection $\phi$ from $\dic$ to $\maj$ can be $C$-Lipschitz for any $C < n/2$. Indeed, suppose $\phi$ is such a bijection and let $x \in \{0,1\}^n$ be such that $\phi(x) = \sum_{i=1}^n e_i$. Clearly $y := x - e_1$ differs from $x$ in the first coordinate so $\dic(y) \neq \dic(x)$. By the definition of $\phi$, we must have $\maj(\phi(y))  = 0$ and $|\phi(y)|$ is at most $n/2$. Hence, $\dist(\phi(x), \phi(y)) \geq n/2$ which gives a contradiction.
		
		For maps from $\maj$ to $\dic$, the situation is better. Rao and Shinkar \cite{RAO2018} showed that, for all odd $n$, there is a mapping $\psi$ from $\maj$ to $\dic$ which is 11-Lipschitz.
		As noted above, the function $\psi^{-1}$ cannot be $O(1)$-Lipschitz. However, it has the weaker property that $\psi^{-1}$  has $O(1)$-average stretch. Indeed, Rao and Shinkar's construction gives a Lipschitz function that is in fact an $O(1)$-bi-Lipschitz bijection from the upper half of $\{0,1\}^n$ to the half-cube with first coordinate $1$; similarly $\psi$ induces a $O(1)$-bi-Lipschitz bijection between the lower half of $\{0,1\}^n$ and the half-cube with first coordinate $0$. As there are only $2^{n-1}$ edges between the two half-cubes, the average stretch of $\psi^{-1}$ is $O(1)$.

		\subsection{Bijections between $\dic$ and $\xor$}
		Rao and Shinkar note that the map $\phi: \{0,1\}^n \to \{0,1\}^n$ given by $\phi(x) = (\xor(x), x_2, x_3, \dots, x_n)$ is a mapping from $\dic$ to $\xor$, and it is easy to check that $\phi$ is 2-bi-Lipschitz. This leads them to consider maps with stronger properties. In the above map, the first output bit depends on all $n$ input bits and the map is not $k$-local for any $k<n$. However, one can easily find a map which is $2$-local: Rao and Shinkar give the example $\phi(x) = (x_1 + x_2, x_2 + x_3, \dots, x_{n-1} + x_n, x_n)$ which is $2$-Lipschitz and $2$-local, although the inverse of this map is not even $(n-1)$-Lipschitz. In \cite{RAO2018} Rao and Shinkar construct a map $\phi$ which is $2$-Lipschitz and $3$-local and where the inverse is $O(\log n)$-Lipschitz. This leads them to ask the following question.
		
		\begin{question}[Question 6.1 in \cite{RAO2018}]
			Is there a mapping from Dictator to XOR that is $O(1)$-local and $O(1)$-bi-Lipschitz?
		\end{question}
	
		We answer this in the negative with the following theorem.
	
		\begin{theorem}\label{thm:dictoxor}
		Let $\phi: \{0,1\}^n \to \{0,1\}^n$ be a mapping from $\dic$ to $\xor$ which is $C$-Lipschitz and where each output bit depends on at most $k$ input bits. Then there is a constant $\delta = \Omega(1/(k + \log(C)))$ such that the inverse map $\phi^{-1}$ is not $ \delta \log(n)$-Lipschitz.
		
		Furthermore, if $\phi$ is a linear mapping, then we may take $\delta = \Omega(1/\log(C+k))$.
		\end{theorem}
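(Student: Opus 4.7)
The plan is to show that, for any $\phi$ satisfying the hypotheses, one can find $x, x' \in \{0,1\}^n$ with $\dist(\phi(x), \phi(x')) = 1$ but $\dist(x, x') \ge \delta \log n$, which immediately forces the Lipschitz constant of $\phi^{-1}$ to be at least $\delta \log n$.

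First, I would extract a bounded-degree graph capturing the locality of $\phi$. Let $S_j \subseteq [n]$ be the set of inputs on which $\phi_j$ depends (so $|S_j| \le k$), and let $J_i = \{j : i \in S_j\}$. Any nonzero influence of a variable on a $k$-variable Boolean function is at least $2^{-(k-1)}$, so the $C$-Lipschitz hypothesis $\sum_{j \in J_i} \Pr_x[\phi_j(x) \ne \phi_j(x+e_i)] \le C$ gives $|J_i| \le C\cdot 2^{k-1}$, and hence the locality graph $G$ on $[n]$ with $i \sim_G i'$ iff $\{i,i'\} \subseteq S_j$ for some $j$ has bounded maximum degree $d = d(C,k)$. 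A brief check shows $G$ is connected: otherwise $\phi$ factors as a product of bijections on the components of $G$, and the identity $\xor(\phi(x)) = x_1$ would force $\xor$ of $\phi$ restricted to each component not containing bit $1$ to be constant, which is incompatible with bijectivity on that component. Since $|B_G(1,r)| \le d^{r+1}$, for $r = \lfloor (\log n)/(2\log d) \rfloor = \Omega(\log n)$ the ball $B_G(1,r)$ has at most $\sqrt{n}$ vertices, so we can find an input bit $i^*$ at $G$-distance at least $r$ from bit $1$; fix a shortest $G$-path $1 = i_0, i_1, \ldots, i_t = i^*$ of length $t \ge r$.

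The central step is to find $x$ such that $x' := x + \sum_{s=0}^t e_{i_s}$ (at Hamming distance $t+1$ from $x$) satisfies $\dist(\phi(x),\phi(x')) = 1$. Every $\phi_j$ with $S_j$ disjoint from the path is automatically unchanged, so only the at most $(t+1)d = O(\log n)$ ``near-path'' coordinates can change; whether $\phi_j(x) = \phi_j(x')$ for such a $j$ depends only on $x|_{S_j}$ and on $S_j \cap \{i_0,\ldots,i_t\}$. I would argue --- either by a probabilistic argument over random $x$ (each cancellation condition constrains only the $\le k$ bits of one $S_j$ and so has probability a constant depending only on $k$), or by a vertex-by-vertex inductive construction along the path --- that one can arrange simultaneous cancellation at every near-path $\phi_j$ except a single target $j^*$, so that $\phi(x') = \phi(x) + e_{j^*}$.

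The main obstacle is this last step, because the cancellation conditions at distinct near-path $j$'s are correlated through shared input bits. The key leverage is that each condition has bounded scope ($|S_j| \le k$) and there are only $O(\log n)$ such conditions, so either a Lov\'asz-local-lemma-style argument or a greedy vertex-by-vertex construction processing the path in order should handle the dependencies. Once such an $x$ is found, the pair $(x, x')$ witnesses $\dist(x,x') \ge t+1 = \Omega(\log n)$ with $\dist(\phi(x),\phi(x'))=1$, completing the proof.
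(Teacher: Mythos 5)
Your first paragraph essentially reproduces the paper's setup: the bound $|J_i|\le C2^{k-1}$ via the $2^{1-k}$ lower bound on a non-trivial dependence, the resulting bounded-degree dependency structure, and the ball-growth argument producing a coordinate at distance $\Omega(\log n/\log d)$ from bit $1$ (the paper phrases this with a bipartite graph on input and output bits rather than a graph on inputs only, but the content is the same). The gap is exactly the step you flag as the ``central step''. You fix the flip set to be the whole path $P=\{i_0,\dots,i_t\}$ and hope to choose $x$ so that every near-path output bit except one cancels. But the cancellation condition at a given $j$, namely $\phi_j(x)=\phi_j(x+e_{P\cap S_j})$, can be \emph{unsatisfiable}: if $\phi_j$ restricted to $S_j$ is, say, a dictator of a path coordinate, or the XOR of an odd number of path coordinates, then $\phi_j$ flips for every $x$. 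So the parenthetical claim that each cancellation condition ``has probability a constant depending only on $k$'' is false (the probability can be $0$), which rules out both the Lov\'asz-local-lemma route and a union bound; and a greedy pass along the path cannot repair a constraint that no assignment satisfies. Nothing in your argument excludes several such forced non-cancellations along a single path. Note also that a random $x$ alone certifies nothing: telescoping the Lipschitz bound shows flipping $t+1$ bits changes at most (and typically about) $C(t+1)$ output bits, so without genuine cancellation the ratio $\dist(x,x')/\dist(\phi(x),\phi(x'))$ you obtain is only $O(1)$.

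The paper sidesteps the need to construct a domain-side witness. It works in the image: for an output coordinate $j$ far from $a_1$ in the bipartite dependency graph it takes an \emph{arbitrary} $y$, sets $x=\phi^{-1}(y)$ and $x'=\phi^{-1}(y+e_j)$, and lets $X$ be their difference set. Since $\dic(x)\neq\dic(x')$ we have $a_1\in A_X$, and since $\phi_j$ changed, $b_j$ is adjacent to $A_X$. The key dichotomy is then: either $a_1$ and $b_j$ lie in the same component of the subgraph induced by $A_X$ and its neighbours, in which case $|X|\ge(d+1)/2$ because that subgraph contains a path of length $\ge d$; or they do not, in which case flipping the inputs of the component containing $a_1$ leaves $\phi$ unchanged, contradicting injectivity. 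Some such injectivity dichotomy is what your construction is missing; as written, the central step is a genuine gap.
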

			
		It follows that the map constructed by Rao and Shinkar in \cite{RAO2018} is essentially best possible: if $\phi$ is a mapping from $\dic$ to $\xor$ which is $O(1)$-Lipschitz and $O(1)$-local, then $\phi^{-1}$ cannot be $o(\log n)$-Lipschitz. We prove Theorem \ref{thm:dictoxor} in Section \ref{sec:dictoxor}.
		 
		\subsection{Bijections between $\xor$ and $\maj$}
		Since we have a 2-bi-Lipschitz map from $\dic$ to $\xor$, we might expect the maps between $\xor$ and $\maj$ to behave similarly to those between $\dic$ and $\maj$. Indeed, composing this 2-bi-Lipschitz map from $\dic$ to $\xor$ with the $O(1)$-Lipschitz map from $\maj$ to $\dic$ gives a $O(1)$-Lipschitz map from $\maj$ to $\xor$, and the argument used to show that there is no $O(1)$-Lipschitz mapping from $\dic$ to $\maj$ can also be applied to mappings from $\xor$ to $\maj$ to show that they cannot be $C$-Lipschitz for $C < n/2$. However, while there is a map from $\xor$ to $\dic$ with constant average stretch, Rao and Shinkar \cite{RAO2018} show that for any mapping $\phi$ from $\xor$ to $\maj$ the average stretch is $\Omega(\sqrt{n})$. They then ask if this lower bound is tight:
		\begin{question}[Question 6.3 in \cite{RAO2018}]
			Is there a mapping $\phi$ from $\xor$ to $\maj$ such that $\avgStretch(\phi) = O(\sqrt{n})$?
		\end{question}

		In Section \ref{sec:xortomaj} we show that the bound is indeed tight by giving a map which has average stretch $O(\sqrt{n})$. This result also follows from a recent result of Boczkowski and Shinkar \cite{boczkowski2019mappings} where they prove, for any two sets $A,B \subseteq \{0,1\}^n$ with $|A| = |B| = 2^{n-1}$, there is a mapping $\phi: A \to B$ with $\expec{\dist(x, \phi(x))} \leq \sqrt{2n}$.
		
		\begin{theorem}\label{thm:xortomaj}
			For odd $n$ there is a mapping $\phi$ from $\xor$ to $\maj$ such that \[\avgStretch(\phi) = O\left( \sqrt{n} \right).\]
		\end{theorem}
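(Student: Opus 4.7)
The plan is to construct an explicit involutive bijection $\phi$ satisfying $\maj(\phi(x))=\xor(x)$ and bound its average stretch by a layer-by-layer case analysis. Call the weight-$k$ layer $L_k = \{x \in \{0,1\}^n : |x|=k\}$ \emph{good} if $\xor$ and $\maj$ already agree on it (that is, $k$ is even with $k \leq (n-1)/2$ or $k$ is odd with $k \geq (n+1)/2$) and \emph{bad} otherwise. Since $n$ is odd, the bad layers pair up as $(L_k, L_{n-k})$ with $k$ odd and $k \leq (n-1)/2$, and these two layers have the same size $\binom{n}{k}$.

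For each such $k$, I would use Hall's theorem to pick a bijection $\beta_k : L_k \to L_{n-k}$ with the containment property $x \subseteq \beta_k(x)$; such a matching exists because the bipartite containment graph between $L_k$ and $L_{n-k}$ is biregular of degree $\binom{n-k}{n-2k}$. Define $\phi$ to be the identity on each good layer and, on each bad pair, the involution $\phi(x) = \beta_k(x)$ for $x \in L_k$, $\phi(y) = \beta_k^{-1}(y)$ for $y \in L_{n-k}$. A direct check confirms that $\phi$ is a bijection and that $\maj(\phi(x)) = \xor(x)$ for every $x$.

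For the stretch bound, I would analyse each edge $(x, x+e_i)$ according to whether $L_{|x|}$ and $L_{|x|+1}$ are good or bad; the containment property $\beta_k(x) \supseteq x$ then yields an exact stretch in each case by a symmetric-difference computation. For instance, if $L_k$ is good and $L_{k+1}$ is a bad lower layer, then $\phi(x) = x$ and $\phi(x+e_i) = (x+e_i) \cup S$ for some $S \subseteq [n] \setminus (x+e_i)$ of size $n-2k-2$, so the stretch is exactly $|\{i\} \cup S| = n - 2k - 1$. The other cases give stretches of the form $|n-2k|+O(1)$, except for the exceptional ``bad-to-bad'' middle transition (which occurs only for $n \equiv 3 \pmod 4$), where a short direct case-analysis shows the stretch is either $1$ or $3$. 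Summing, the total stretch over all $n \cdot 2^n$ ordered edges is at most $n\sum_k \binom{n}{k}\,|n-2k| + O(n\cdot 2^n)$; since $\sum_k \binom{n}{k}|n-2k|$ equals $2^{n+1}$ times the mean absolute deviation of $\operatorname{Bin}(n,1/2)$, which is $\Theta(\sqrt{n})$, the total sum is $O(n^{3/2}\cdot 2^n)$, and dividing by $n \cdot 2^n$ yields $\avgStretch(\phi) = O(\sqrt n)$.

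The main obstacle is the combinatorial bookkeeping across all layer transitions (good--good, good--bad, bad--good, bad--bad) and the two congruence classes $n\pmod{4}$ (which determine whether the middle layers $L_{(n-1)/2}$ and $L_{(n+1)/2}$ are good or bad), together with checking that contributions from ``extreme'' layers $k$ far from $n/2$ -- where individual stretches can be as large as $n-1$ -- are absorbed by the Gaussian decay of $\binom{n}{k}$.
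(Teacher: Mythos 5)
Your proposal is correct and is essentially the paper's proof: the paper also fixes every $x$ with $\xor(x)=\maj(x)$ and swaps the remaining points with a comparable partner of complementary weight, obtaining the same $|n-2m|+O(1)$ per-edge bound and the same mean-absolute-deviation summation. The only (cosmetic) difference is that the paper gets the comparable matching between $L_k$ and $L_{n-k}$ from a symmetric chain decomposition rather than from Hall's theorem on the biregular containment graph.
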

		
		Given the need for such a mapping to have large average stretch, Rao and Shinkar also ask what happens if we relax the problem from finding a bijection to finding an embedding in a larger space.
		\begin{question}[Question 6.4 in \cite{RAO2018}]
			Is there a Lipschitz embedding $\phi: \{0,1\}^n \to \{0,1\}^{\poly(n)}$ such that $\xor(x) = \maj(\phi(x))$ for all $x \in \{0,1\}^n$?
		\end{question}
		
		We give a simple construction that gives a positive answer:
		
		\begin{theorem}\label{thm:embed}
			For every $n$ there exists a 3-Lipschitz embedding $\phi: \{0,1\}^n \to \{0,1\}^{2n+1}$ such that $\xor(x) = \maj(\phi(x))$ for all $x \in \{0,1\}^n$.
		\end{theorem}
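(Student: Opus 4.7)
The plan is to give an explicit construction rather than any kind of recursion or probabilistic argument. Let
\[
\phi(x) = \bigl(x_1, \ldots, x_n,\, 1-x_1, \ldots, 1-x_n,\, \xor(x)\bigr) \in \{0,1\}^{2n+1}.
\]
That is, I concatenate $x$ with its bitwise complement $\bar{x}$ and append the parity bit. The intuition is that the first $2n$ coordinates always have exactly $n$ ones (so they are exactly balanced in a string of odd length $2n+1$), and the single parity bit then tips the majority in the desired direction while preserving injectivity via the first $n$ coordinates.

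I would then verify the three required properties in order. First, injectivity (and hence ``embedding'') is trivial: the first $n$ output coordinates already recover $x$. Second, for the identity $\xor(x)=\maj(\phi(x))$, I would compute the Hamming weight
\[
|\phi(x)| = |x| + (n-|x|) + \xor(x) = n + \xor(x),
\]
and note that since the codomain has the odd dimension $2n+1$, $\maj(\phi(x))=1$ if and only if $|\phi(x)| \geq n+1$, which happens precisely when $\xor(x)=1$.

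Third, for the Lipschitz bound I would handle the unit step first: if $y = x + e_i$, then exactly three coordinates of $\phi$ change, namely the $i$-th, the $(n+i)$-th (coming from $\bar{x_i}$), and the last one (since the parity flips). So $\dist(\phi(x),\phi(y)) = 3$ when $\dist(x,y)=1$. For arbitrary $x\neq y$ this gives $\dist(\phi(x),\phi(y)) \leq 2\dist(x,y) + 1 \leq 3\dist(x,y)$, where the last inequality uses $\dist(x,y)\geq 1$.

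There is no real obstacle here; the only thing worth flagging is the reason the output dimension is $2n+1$ rather than, say, $2n$: using the complement makes the first $2n$ coordinates perfectly balanced, so a single parity bit in an odd-length word is enough to control the majority, and the ``$+1$'' in the Lipschitz estimate is absorbed by the factor $3$.
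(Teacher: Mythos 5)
Your construction is identical to the paper's up to a permutation of the output coordinates (the paper writes $\phi(x) = (x^c, \xor(x), x)$), and your verification of the weight identity and the bound $\dist(\phi(x),\phi(y)) = 2\dist(x,y) + \xor(x+y) \le 3\dist(x,y)$ matches the paper's argument exactly. The proof is correct.
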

	
		\subsection{Bijections from $\dic$ to a random $f$}
	
		Rao and Shinkar also consider random functions. We say a function $f:\{0,1\}^n \to \{0,1\}$ is \emph{balanced} if $f^{-1}(1)$ and $f^{-1}(0)$ are of the same size.  Building on a construction of H{\aa}stad, Leighton and Newman \cite{Hastad1987}, Rao and Shinkar show that with high probability there is a bijection from $\dic$ to a uniformly random balanced function $f$ which has average stretch bounded by an absolute constant \cite{RAO2018}. They ask whether we can in fact ask for more:
		
		\begin{question}[Question 6.2 in \cite{RAO2018}]
			Is it true that with high probability there is a $O(1)$-bi-Lipschitz mapping from $\dic$ to a uniformly random balanced function $f$?
		\end{question}
		
		In Section \ref{sec:random} we give a positive answer to this question. In fact, we prove a stronger statement about the maximum distance between $x$ and $\phi(x)$.
	
		\begin{theorem}\label{thm:random}
			Let $f: \{0,1\}^n \to \{0,1\}$ be a balanced boolean function chosen uniformly at random. Then, with high probability, there exists a mapping $\phi: \{0,1\}^n \to \{0,1\}^n$ from $\dic$ to $f$ such that for all $x \in \{0,1\}^n$, \[ \dist(x, \phi(x)) \leq C \]
			where $C$ is an absolute constant. In particular, $\phi$ is $(2C + 1)$-bi-Lipschitz.
		\end{theorem}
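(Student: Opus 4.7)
The plan is to reduce the problem to finding a perfect matching in a bipartite graph and then to verify Hall's condition for the random matching instance. Write $A_i = \{x : x_1 = i\}$ and define $M = A_0 \cap f^{-1}(1)$ and $M' = A_1 \cap f^{-1}(0)$; since $|A_0| = |f^{-1}(0)| = 2^{n-1}$, a quick inclusion-exclusion gives $|M| = |M'|$, which we denote by $m$. If we can produce a bijection $\pi : M \to M'$ with $\dist(x, \pi(x)) \leq C$ for every $x \in M$, then setting $\phi$ equal to $\pi$ on $M$, to $\pi^{-1}$ on $M'$, and to the identity elsewhere yields a mapping from $\dic$ to $f$ with $\dist(x, \phi(x)) \leq C$ for every $x \in \{0,1\}^n$. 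The bi-Lipschitz conclusion is then immediate, since the triangle inequality gives $\dist(\phi(x), \phi(y)) \leq 2C + \dist(x, y) \leq (2C+1)\dist(x, y)$ for distinct $x, y$, and the same bound holds for $\phi^{-1}$.

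Under a uniformly random balanced $f$, conditional on $|M| = m$, the sets $M$ and $M'$ are independent uniform random $m$-subsets of $A_0$ and $A_1$, and $m$ is a hypergeometric variable with mean $2^{n-2}$ and is tightly concentrated there. Identifying $A_0$ and $A_1$ with $\{0,1\}^{n-1}$ via the coordinates $x_2, \ldots, x_n$ turns the condition $\dist(x, y) \leq C$ (for $x \in A_0, y \in A_1$) into $\dist_{n-1}(x, y) \leq C - 1$. By Hall's Marriage Theorem, the required $\pi$ exists iff for every $S \subseteq M$, $|B_{C-1}(S) \cap M'| \geq |S|$, where $B_{C-1}(S)$ denotes the $(C-1)$-neighborhood of $S$ in $\{0,1\}^{n-1}$. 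Equivalently, Hall fails iff there exist $S, T \subseteq \{0,1\}^{n-1}$ with $\dist_{n-1}(S, T) \geq C$, $|S| + |T| > m$, $S \subseteq M$ and $T \subseteq M'$.

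For a fixed admissible pair $(S, T)$ of sizes $s, t$, the independence of $M$ and $M'$ gives $\Pr[S \subseteq M, T \subseteq M'] = \binom{m}{s}\binom{m}{t}/\bigl(\binom{2^{n-1}}{s}\binom{2^{n-1}}{t}\bigr)$, which is at most $2^{-(s+t)}$ and decays strictly faster when $s+t$ is close to $2m$. To control the union bound I would partition into three regimes. For small $s$, rapid vertex expansion ($|B_{C-1}(S)|/|S| = \Omega(n^{C-1})$) combined with a hypergeometric Chernoff bound handles each $S$, and the number of such $S$ is small enough. For $s$ within $\sum_{i \leq C-1}\binom{n-1}{i}$ of $2^{n-1}$, the neighborhood $B_{C-1}(S)$ is automatically all of $\{0,1\}^{n-1}$, so Hall holds deterministically. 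The delicate bulk regime $s \asymp 2^{n-2}$ is the main obstacle: Harper's vertex-isoperimetric inequality yields $|B_{C-1}(S)| - |S| = \Omega_C(2^{n-1}/\sqrt{n})$, attained by Hamming balls, and a naive count over all $S$ of this size gives a doubly exponential contribution that swamps the probability bound. The remedy is to combine a stability form of Harper's inequality (only $S$ closely resembling a Hamming ball can have such weak expansion, and there are only $\exp(o(2^n))$ such $S$) with the sharper estimate for $\Pr[S \subseteq M]$ when $s$ is comparable to $m$, so that the bulk-regime union bound becomes summable for $C$ a sufficiently large absolute constant.
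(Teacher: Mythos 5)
Your opening reduction is correct and is a genuinely different route from the paper's: the paper never works with the two half-cubes globally, but instead partitions $\{0,1\}^n$ into blocks of size about $n^{42}$ and bounded diameter, shows every block has $f$-imbalance $O(n^{22})$ while most have imbalance $\Omega(n^{18})$, routes the excess between \emph{adjacent} blocks along randomly sampled shortest paths via a perfect matching in an auxiliary Erd\H{o}s--R\'enyi-type bipartite graph on only about $2^n/n^{18}$ vertices, and then matches greedily inside each block. That detour exists precisely to avoid the step where your argument breaks. Your outer steps ($|M|=|M'|$, the conditional independence and uniformity of $M$ and $M'$, and the $(2C+1)$-bi-Lipschitz deduction) are all fine.

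The gap is the bulk regime, and it cannot be repaired by isoperimetric stability: the first moment itself diverges there. Write $N=2^{n-1}$, so $m\approx N/2$, and take $s=t\approx N/4$. Let $B^*$ be a Hamming ball with $|B_{C-1}(B^*)|=3N/4$, so $|B^*|=3N/4-O(CN/\sqrt{n})$. Every one of the $\binom{3N/4-o(N)}{N/4}=2^{(0.6887-o(1))N}$ subsets $S\subseteq B^*$ of size $N/4$ admits the same valid partner $T=\{0,1\}^{n-1}\setminus B_{C-1}(B^*)$ of size $N/4$, and for each such pair $\mathbb{P}[S\subseteq M,\ T\subseteq M']=\bigl(\binom{3N/4}{N/4}/\binom{N}{N/2}\bigr)^2=2^{-(0.6226+o(1))N}$. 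Hence the union bound over admissible pairs is at least $2^{0.066N}$, so it cannot be $o(1)$ no matter how you exploit expansion; and these $S$ are arbitrary quarter-density subsets of a ball, so no stability theorem discards them. (Your enumeration claim also fails quantitatively at intermediate deficits: for any fixed $\delta>0$ there are $2^{\Theta(\delta\log(1/\delta)N)}$ sets of size $N/2$ with boundary excess at most $\delta N+O(CN/\sqrt{n})$ --- all $N/2$-subsets of a ball of size $(1/2+\delta)N$ --- while the corresponding Hall-violation event has probability at least $e^{-O(\delta^2N)}$, so even the union bound over single sets $U$ of the events $|M\cap U|>|M'\cap B_{C-1}(U)|$ diverges.) The truth of Hall's condition here is governed by the strong positive correlations among these events, which a first-moment argument cannot see; you would need either a local routing argument in the spirit of the paper or a genuinely new second-moment/martingale analysis, neither of which is sketched.
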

		
		It is straightforward to extend this to bijections between two independent uniformly random balanced boolean functions $f$ and $g$.
	
		\begin{corollary}\label{cor}
			Let $f, g : \{0,1\}^n \to \{0,1\}$ be two balanced boolean functions chosen independently and uniformly at random. Then, with high probability, there exists a mapping $\phi: \{0,1\}^n \to \{0,1\}^n$ from $f$ to $g$ such that for all $x \in \{0,1\}^n$,\[ \dist(x, \phi(x)) \leq C\] where $C$ is an absolute constant. In particular, $\phi$ is $(2C + 1)$-bi-Lipschitz.
		\end{corollary}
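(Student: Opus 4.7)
The plan is to use $\dic$ as a universal hub: apply Theorem \ref{thm:random} separately to $f$ and to $g$, then compose the two maps. Concretely, since $f$ is a uniformly random balanced function, Theorem \ref{thm:random} gives with high probability a bijection $\phi_f : \{0,1\}^n \to \{0,1\}^n$ from $\dic$ to $f$ such that $\dist(x, \phi_f(x)) \leq C_0$ for all $x$, where $C_0$ is the absolute constant from Theorem \ref{thm:random}. Applying the theorem independently to $g$ yields, with high probability, a bijection $\phi_g$ from $\dic$ to $g$ with $\dist(x, \phi_g(x)) \leq C_0$. Since $f$ and $g$ are independent, a union bound shows both events hold simultaneously with high probability.

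Next I would set $\phi := \phi_g \circ \phi_f^{-1}$. For any $x \in \{0,1\}^n$ we have $g(\phi(x)) = g(\phi_g(\phi_f^{-1}(x))) = \dic(\phi_f^{-1}(x)) = f(\phi_f(\phi_f^{-1}(x))) = f(x)$, so $\phi$ is indeed a mapping from $f$ to $g$. To bound the displacement, write $y = \phi_f^{-1}(x)$; then by the triangle inequality
\[
\dist(x, \phi(x)) = \dist(\phi_f(y), \phi_g(y)) \leq \dist(\phi_f(y), y) + \dist(y, \phi_g(y)) \leq 2C_0.
\]
So $\phi$ satisfies the displacement bound of the corollary with $C = 2C_0$.

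The bi-Lipschitz claim then follows from the same trick used in the deduction below Theorem \ref{thm:random}: for any $x \neq y$, the triangle inequality gives $\dist(\phi(x), \phi(y)) \leq \dist(\phi(x), x) + \dist(x,y) + \dist(y, \phi(y)) \leq 2C + \dist(x,y) \leq (2C+1)\dist(x,y)$, since $\dist(x,y) \geq 1$. The identical argument applied to $\phi^{-1} = \phi_f \circ \phi_g^{-1}$ (which also has displacement at most $2C_0 = C$) shows $\phi^{-1}$ is $(2C+1)$-Lipschitz, completing the proof.

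There is essentially no obstacle here beyond invoking Theorem \ref{thm:random} twice; the key observation is that $\dic$ is a deterministic function independent of the random choices, so the two applications of the theorem are independent events, and the composition automatically preserves the uniform displacement bound up to a factor of two. No further union bound or probabilistic argument is needed beyond noting that the intersection of two high-probability events is a high-probability event.
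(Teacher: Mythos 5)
Your proposal is correct and follows essentially the same route as the paper: compose $\phi := \phi_g \circ \phi_f^{-1}$ using Theorem \ref{thm:random} applied to each of $f$ and $g$, bound the displacement by the triangle inequality, and deduce the bi-Lipschitz property as in the proof of Theorem \ref{thm:random}. Your displacement bound $2C_0$ is in fact the correct consequence of the triangle inequality (the paper states $c^2$, which is at best a looser constant), but this makes no difference to the statement since either way the bound is an absolute constant.
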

		\begin{proof}
			By the above theorem there exists a mapping $\phi_f$ from $\dic$ to $f$ such that $\dist(x, \phi_f(x)) \leq c$ and a mapping $\phi_g$ from $\dic$ to $g$ such that $\dist(x, \phi_g(x)) \leq c$ for some absolute constant $c$, both with high probability. Using the triangle inequality, it is easy to see that $\phi = \phi_g \circ \phi_f^{-1}$ is a mapping from $f$ to $g$ such that $\dist(x, \phi(x)) \leq 2c$. This proves the first part of the corollary with $C = 2c$.
			
			The second part also follows easily. Fix $x \in \{0,1\}^n$ and let $y = x + e_i$ for some $i$. Then 
			\[\dist(\phi(x), \phi(y)) \leq \dist(x, \phi(x)) + \dist(x,y) + \dist(y, \phi(y)) \leq 2C + 1 \]
			and the triangle inequality shows $\phi$ is $(2C+1)$-Lipschitz. A similar argument shows that $\phi^{-1}$ is also $(2C+1)$-Lipschitz.
		\end{proof}
	
	The rest of this paper is organised as follows. In Section \ref{sec:dictoxor} we prove Theorem \ref{thm:dictoxor}, in Section \ref{sec:xortomaj} we prove Theorem \ref{thm:xortomaj} and Theorem \ref{thm:embed}. We then prove Theorem \ref{thm:random} in Section \ref{sec:random}.
	
		\section{Bijections from $\dic$ to $\xor$}\label{sec:dictoxor}
				
		Given a mapping $\phi: \{0,1\}^n \to \{0,1\}^n$ from $\dic$ to $\xor$ we define the \emph{dependency graph} $D_\phi$ as follows. Let $D_\phi$ be the bipartite graph with vertex sets $A = \{a_1, \dots, a_n\}$ and $B = \{b_1, \dots, b_n\}$ where there is an edge $a_ib_j$ if and only if the $j$th output bit of $\phi$ depends on the $i$th input bit. The following lemma shows that if there is an output bit $\phi_j$ which is at a large distance from $a_1$ in the dependency graph, then changing $\phi_j$ must cause many input bits to change. 
		\begin{lemma}\label{lem:distance}
			Suppose $\phi$ is a mapping from $\dic$ to $\xor$ such that the distance between $a_1$ and $b_j$ in $D_\phi$ is at least $d$. Then, for any $y \in \{0,1\}^n$,
			\[ \dist\left(\phi^{-1}(y), \phi^{-1}(y + e_j) \right) \geq \frac{d+1}{2}. \]
		\end{lemma}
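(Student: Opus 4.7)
The plan is to set $x = \phi^{-1}(y)$ and $x' = \phi^{-1}(y+e_j)$, and to write $S = \{i : x_i \neq x'_i\}$ for the set of coordinates in which they differ. Because $\phi$ carries $\dic$ to $\xor$, the identity $x_1 = \xor(\phi(x)) = \xor(y)$ together with the analogous equation for $x'$ gives $x'_1 = 1 - x_1$, so $1 \in S$. The target bound $|S| \geq (d+1)/2$ will follow if I can exhibit a walk in $D_\phi$ from $a_1$ to $b_j$ whose $A$-side vertices all lie in $S$: any such walk may be assumed to be a path using at most $|S|$ distinct vertices in $A$, so it has length at most $2|S|-1$, and together with $\dist_{D_\phi}(a_1,b_j) \geq d$ this yields $|S|\geq (d+1)/2$.

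To produce this walk, consider the induced subgraph $D_\phi[S \cup B]$ (all output vertices, but only input vertices from $S$), and let $C_A \subseteq S$ and $C_B \subseteq B$ be the two sides of the connected component of $a_1$. The key step is to show $b_j \in C_B$. I would argue by contradiction: suppose $b_j \notin C_B$, and introduce the intermediate point $z$ defined by $z_i = x'_i$ for $i \in C_A$ and $z_i = x_i$ elsewhere. Since each $\phi_k$ depends only on the coordinates in $N(b_k)$, the change $\phi(z) - \phi(x)$ is supported on $N(C_A) \subseteq C_B$; conversely, no $a_i \in S \setminus C_A$ can have a neighbour in $C_B$ (otherwise $a_i$ itself would belong to $C_A$), so $\phi(x') - \phi(z)$ is supported on $B \setminus C_B$. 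These two supports are disjoint, and their XOR equals $\phi(x') - \phi(x) = e_j$; since $b_j \notin C_B$, the part living in $C_B$ must vanish, forcing $\phi(z) = \phi(x)$.

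The $\dic$-to-$\xor$ identity then closes the argument: because $a_1 \in C_A$ we have $z_1 = 1 - x_1$, so $\xor(\phi(z)) = z_1 \neq x_1 = \xor(\phi(x))$, which is incompatible with $\phi(z) = \phi(x)$. Hence $b_j \in C_B$, and any shortest $a_1$-to-$b_j$ path inside $D_\phi[S \cup B]$ realises $\dist_{D_\phi}(a_1,b_j) \leq 2|S|-1$, giving $|S| \geq (d+1)/2$. The main obstacle is isolating the right intermediate point $z$ and verifying that the decomposition of the flip $x \to x'$ into two stages really does separate the image changes into the complementary halves $C_B$ and $B \setminus C_B$; once that component structure is in place the parity contradiction via $\xor \circ \phi = \dic$ is essentially forced.
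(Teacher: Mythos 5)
Your proof is correct and follows essentially the same route as the paper's: both restrict the dependency graph to the flipped coordinates, show that $b_j$ must lie in the component of $a_1$ by constructing a hybrid point whose image coincides with $\phi(x)$, and then convert the resulting path-length bound into $|S| \geq (d+1)/2$ via bipartiteness. The only cosmetic difference is that the paper derives the contradiction in the component argument from injectivity of $\phi$, whereas you use the relation $\xor \circ \phi = \dic$; both are valid.
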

		\begin{proof}
			Suppose that $\phi^{-1}(y) = x$ and $\phi^{-1}(y + e_j) = x'$ and let the set of coordinates in which they differ be $X$. Now let $D_\phi'$ be the subgraph of $D_\phi$ induced by $A_X := \{ a_i : i \in X\}$ and its neighbours. Since $\phi(x)$ and $\phi(x')$ differ in the $j$th bit, one of the neighbours of $b_j$ must be in $A_X$ and $b_j \in V(D_\phi')$. As $\phi$ is a mapping from $\dic$ to $\xor$ and $\xor(y) \neq \xor(y + e_j)$, we must have $\dic(x) \neq \dic(x')$ and so $a_1 \in A_X$. If $a_1$ and $b_j$ are in the same component of $D_\phi'$, then $D_{\phi}'$ must contain a path from $a_1$ to $b_j$, which must have length at least $d$. Since $D'_\phi$ is bipartite, $|X| \geq (d+1)/2$ and we are done.
			
			Otherwise, let the component of $D'_\phi$ containing $a_1$ have vertices $A_Y \cup B_{Y'}$, where $Y, Y' \subset [n]$. Consider the input $x + e_Y$ where $e_Y = \sum_{i \in Y} e_i$: we will show that $\phi(x + e_Y) = \phi(x)$. If $y' \in Y'$, then the output bit $\phi_{y'}$ depends only on the input bits in $Y$ and input bits that are not in $X$ so $\phi_{y'}(x + e_Y)  = \phi_{y'}(x') = \phi_{y'}(x)$ (since $\phi(x)$ and $\phi(x')$ differ only in the $j$th bit and $j \not \in Y'$). Changing the value of $x$ on $Y$ can only change $\phi(x)$ in the output bits in $Y'$ and so $\phi_i(x) = \phi_i(x + e_Y)$ for every $i \not \in Y'$. This means that $\phi(x) = \phi(x+e_Y)$, which gives a contradiction as $\phi$ is a bijection.	
		\end{proof}
	
		Now that we have related distance in the dependency graph $D_\phi$ to the Hamming distance of the inverse map, we prove Theorem \ref{thm:dictoxor} by showing that the conditions on $\phi$ imply that there is a vertex at least logarithmically far from $a_1$ in $D_\phi$.
		\begin{proof}[Proof of Theorem \ref{thm:dictoxor}]
			As each output bit of $\phi(x)$ depends on at most $k$ input bits of $x$, the degree of a vertex $b_j$ in $D_\phi$ is at most $k$.
			
			Let us now show that the degrees of the $a_i$ are also bounded. Fix a vertex $a_i$ in $D_\phi$ and let it have neighbours $b_{j_1}, \dots, b_{j_m}$. The value of the output bit $j_1$ depends on $i$ and at most $k-1$ other bits which we denote $X$. By definition, there is some assignment $x$ of the $k-1$ bits $X$ such that $\phi(x)$ and $\phi(x + e_i)$ differ in the bit $j_1$. Let $U \in \{0,1\}^n$ be a bit string chosen uniformly at random: then the probability that this is equal to $x$ on $X$ is $2^{1-k}$ and, hence, the probability that $\phi(U)$ and $\phi(U+e_i)$ differ in bit $j_1$ is at least $2^{1-k}$.
			The same holds for bits $j_2, \dots, j_m$ and so, by the linearity of expectation, the expected number of bits in which $\phi(U)$ and $\phi(U+e_i)$ differ is at least $m 2^{1-k}$. There must be some value for $U$ for which the number of bits that differ is at least $m2^{1-k}$ and, as $\phi$ is $C$-Lipschitz, we must have $m2^{1-k} \leq C$. In particular, the degree of a vertex $a_i$ is bounded by $C 2^{k-1}$.
			
			Define $\Delta := \max \{k, C2^{k-1} \}$ so that every vertex has degree at most $\Delta$. The number of vertices at distance at most $l$ from $a_1$ is at most $(\Delta + 1)^l$ and hence, if the distance from $a_1$ to any vertex is at most $d$, $(\Delta + 1)^d \geq 2n$ and $d \geq (\log 2n) / \log(\Delta + 1)$. As the graph is bipartite, there must be a vertex $b_j$ at distance at least $d-1$ from $a_1$ and hence, by Lemma \ref{lem:distance}, $\phi$ is not $\delta \log(n)$-Lipschitz for $\delta = 1/(2\log(\Delta + 1))$.
			
			If $\phi$ is a linear map, then $\phi(x)$ and $\phi(x + e_i)$ differ in the bits $j_1, \dots, j_m$ for every $x$. Hence, the degree of a vertex $a_i$ is bounded by $C$ (compared to $C2^{k-1}$ in the general case), and the bound on $\delta$  follows from bounding the maximum degree by $C + k$.
			\end{proof}
		
		The construction presented by Rao and Shinkar in \cite{RAO2018} gives a linear map which shows $\delta(2,k) = O(1/\log k)$.
		Theorem \ref{thm:dictoxor} shows that this is tight for linear maps, but only gives the bound $\delta(2,k) = \Omega(1/k)$ for general maps. This raises the following question: Does there exist a map $\phi$ from $\dic$ to $\xor$ which is $C$-Lipschitz, where each output bit depends on at most $k$ input bits and such that $\phi^{-1}$ is $o(\log n/ \log k)$-Lipschitz as $n, k \to \infty$?

		\section{Bijections from $\xor$ to $\maj$}\label{sec:xortomaj}
		It was shown by Rao and Shinkar in \cite{RAO2018} that any mapping $\phi: \{0,1\}^n \to \{0,1\}^n$ from $\xor$ to $\maj$ must have average stretch $\Omega\left( \sqrt{n} \right)$ in any direction. In this section we prove Theorem \ref{thm:xortomaj} by constructing a map with average stretch $O(\sqrt{n})$. Our strategy is to map each $x \in \{0,1\}^n$ for which $\xor(x) = \maj(x)$ to itself, and otherwise to swap $x$ with a $y$ chosen such that $|x| + |y| = n$. The problem is then to find a matching, so that when the matched elements are switched, they are not too far from their neighbours on average. We do this by matching according to a symmetric chain decomposition so that the Hamming distance between switched elements is minimised.  This is similar to the proof of Theorem 1.2 in \cite{RAO2018} which defines a mapping from $\maj$ to $\dic$ by permuting elements along the chains in a particular symmetric chain decomposition. Recall that a \emph{symmetric chain} is a path $C = \{c_k ,\dots, c_{n-k}\}$ in the hypercube such that $|c_i| = i$. A \emph{symmetric chain decomposition} is a partition of the hypercube into symmetric chains. It is well known that these decompositions exist for all $n$ (see e.g. \cite{bollobas1986combinatorics}).

		\begin{proof}[Proof of Theorem \ref{thm:xortomaj}]
			Suppose we have a symmetric chain decomposition of $\{0,1\}^n$ and, for a point $x \in \{0,1\}^n$, let $y_x$ be the unique point in the chain containing $x$ such that $|x| + |y_x| = n$. We note that $\xor(x) \neq \xor(y_x)$ and $\maj(x) \neq \maj(y_x)$, so the bijection $\phi : \{0,1\}^n \to \{0,1\}^n$ defined by
				\[ \phi(x) = \begin{cases}
			x & \text{ if } \xor(x) = \maj(x)\\
			y_x & \text{ if } \xor(x) \neq \maj(x)\\
			\end{cases}\]
			is a mapping from $\xor$ to $\maj$.
			
			\begin{claim}\label{claim}
				Suppose $x \in \{0,1\}^n$ has Hamming weight $m$. Then, for any $i$, 
				\[ \dist\left( \phi(x), \phi(x+ e_i) \right) \leq |n - 2m| + 3 \]
			\end{claim}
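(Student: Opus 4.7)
The plan is to apply the triangle inequality through $x$ and $x+e_i$, after first recording how $\dist(x, \phi(x))$ depends on the structure of the symmetric chain decomposition. The key observation is that if $x$ is swapped (i.e.\ $\xor(x) \neq \maj(x)$), then $\phi(x) = y_x$ lies on the same symmetric chain as $x$ and has weight $n - m$; since a symmetric chain is a monotone path in the hypercube with consecutive elements differing in a single coordinate, we have $\dist(x, y_x) = |n-2m|$. If $x$ is fixed then $\dist(x, \phi(x)) = 0$. The analogous statement holds for $x+e_i$ with $m$ replaced by $m \pm 1$.

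The triangle inequality then yields
\[ \dist(\phi(x), \phi(x+e_i)) \leq \dist(\phi(x), x) + \dist(x, x+e_i) + \dist(x+e_i, \phi(x+e_i)), \]
with $\dist(x, x+e_i) = 1$. If at most one of $x$ and $x+e_i$ is swapped, then at most one of the outer terms is nonzero; using $|n - 2(m \pm 1)| \leq |n-2m| + 2$, the right-hand side is at most $|n-2m| + 3$, as required.

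The main obstacle is the case in which both $x$ and $x + e_i$ are swapped, since here the bare triangle inequality only yields the weaker estimate $|n-2m| + |n - 2(m \pm 1)| + 1$. To handle this, I would carry out a short parity argument. For odd $n$, a point $z$ is swapped if and only if either $|z|$ is even and $|z| \geq (n+1)/2$, or $|z|$ is odd and $|z| \leq (n-1)/2$. Applying this condition simultaneously to $|x| = m$ and $|x+e_i| = m \pm 1$, and splitting on the parity of $m$, the only surviving configurations force $m = (n-1)/2$ (when $x_i = 0$) or $m = (n+1)/2$ (when $x_i = 1$). In either subcase $|n - 2m| = 1$ and $|n - 2(m \pm 1)| = 1$, so the triangle inequality gives $\dist(\phi(x), \phi(x+e_i)) \leq 1 + 1 + 1 = 3 \leq |n-2m| + 3$, completing the proof.
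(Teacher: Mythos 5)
Your proof is correct, but it is organised quite differently from the paper's. The paper proves the claim by a direct case analysis over $\xor(x)$, $\maj(x)$, whether $x_i=0$ or $1$, and (where needed) $\maj(x+e_i)$; in each case it bounds $\dist(\phi(x),\phi(x+e_i))$ by exploiting the comparability of $x$, $x+e_i$ and $y_{x+e_i}$ as nested sets on a symmetric chain. You instead isolate the single structural fact $\dist(z,\phi(z)) \in \{0, |n-2|z||\}$ (using that a symmetric chain is a monotone path, so $x$ and $y_x$ are comparable and their distance equals the difference of their weights) and then run the triangle inequality through the edge $xy$. This disposes of all cases except the one where both endpoints are swapped, which you correctly identify as the only place where the bare triangle inequality is too weak, and you close it with a clean parity argument showing that two adjacent swapped points must have weights $(n\pm 1)/2$, so both outer terms equal $1$. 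Your version buys a shorter, more systematic argument with fewer cases (and makes explicit why the bound degrades gracefully near the middle layers), at the cost of needing the parity observation about which weights are swapped; the paper's version is more hands-on but never needs to reason about when both endpoints move. Both are valid, and yours would be a legitimate replacement for the proof in the paper.
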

			\begin{proof}[Proof of Claim \ref{claim}]
				The proof of this claim is a case analysis over $\xor(x)$, $\maj(x)$, whether $e_i \leq x$ and, where necessary, $\maj(x+e_i)$. Let us fix $i$ and use $x'$ to denote $x + e_i$.
				
				First consider the case where $\xor(x) = 1,\ \maj(x) = 1$ and $x_i = 0$. Then we have $|x'| = m + 1$, $\xor(x') = 0$ and $\maj(x') = 1$. Hence, $\phi$ keeps $x$ constant and switches $x'$ to $y_{x'}$. We know that $y_{x'} \leq x'$ and is of weight $n - m - 1$ so $x$ and $y_{x'}$ must agree in at least $n - m - 2$ places. Hence, the distance between $x$ and $y_{x'}$ is bounded by $1 + m - (n - m -2) = 2m - n + 3$.
				
				Now consider the case $\xor(x) = 1,\ \maj(x) = 1$ and $x_i = 1$. In this case it is possible for $\maj(x') = 1$ or $\maj(x') = 0$ and we must consider these cases separately. Suppose $\maj(x') = 1$. Then $\phi$ keeps $x$ constant and switches $x'$ to $y_{x'}$ where $|y_{x'}| = n - m + 1$. We also have $y_{x'} \leq x + e_i \leq x$ so the stretch is $2m -n - 1$. If $\maj(x + e_i) = 0$, then $\phi$ keeps both $x$ and $x + e_i$ constant and the stretch is $1$.
				
				The other cases follow similarly.
			\end{proof}
		
		For a uniformly chosen $x$ and an arbitrarily chosen direction we have
		\begin{equation}\label{eqn:xortomaj}
			\mathbb{E}_{x,i} \left[ \dist \left(\phi(x), \phi(x + e_i)\right)  \right] \leq 3 + 2^{-n} \sum_{m = 0}^n \left| 2m - n \right| \binom{n}{m}.
		\end{equation}
		Let $S_n$ be the position a simple random walk on $\mathbb{Z}$ after $n$ steps (i.e. $S_n = \sum_{i=1}^n X_i$ where the $X_i$ are independent and take the value $1$ with probability 1/2 and the value $-1$ otherwise). Then $ \expec{|S_n|} = 2^{-n}\sum_{m = 0}^n \left| 2m - n \right| \binom{n}{m}$, and 
		applying standard results we get
		\[ \mathbb{E}_{x,i} \left[ \dist \left(\phi(x), \phi(x + e_i)\right)   \right] \leq 3 + \frac{n+1}{2^{n+1}} \binom{n + 1}{(n+1)/2} \sim \sqrt{\frac{2n}{\pi}}.\]
		\end{proof}
		Given that the average stretch for a mapping from $\xor$ to $\maj$ must be $\Omega(\sqrt{n})$, Rao and Shinkar ask if it is possible to relax the problem by increasing the size of the codomain and asking instead for a $O(1)$-Lipschitz injection. In the proof of Theorem \ref{thm:embed} below we give one such example.
		\begin{proof}[Proof of Theorem \ref{thm:embed}]
			Define $\phi(x) : \{0,1\}^n \to \{0,1\}^{2n+1}$ by \[ \phi(x) = \left( x^c, \xor(x), x\right) \]	where $x^c$ is the bitwise complement of $x$ (and where we have written $(a,b,c)$ for the concatenation of vectors $a$, $b$ and $c$). As we can read off $x$ from the last section, this is clearly a one-to-one mapping. If $x$ has Hamming weight $m$, $x^c$ has Hamming weight $n-m$, so overall $\phi(x)$ has Hamming weight $n + \xor(x)$. Hence, $\maj(\phi(x)) = \xor(x)$ for all $x \in \{0,1\}^n$.
			
			Suppose $x, y \in \{0,1\}^n$ are distinct. Then
			\begin{align*}
			\dist(\phi(x), \phi(y)) &= \dist(x^c, y^c) + \dist(\xor(x), \xor(y)) + \dist(x, y)\\
			&= 2\dist(x,y) + \xor(x+y)\\
			&\leq 3 \dist(x,y)
			\end{align*}
			and the map is 3-Lipschitz.
		\end{proof}
	\section{Bijections from $\dic$ to a random $f$}\label{sec:random}
	Let $f$ be a boolean function. We say a point $x \in \{0,1\}^n$ is a \emph{1 of $f$} if $f(x) = 1$ and similarly say $x$ is a \emph{0 of $f$} if $f(x) = 0$. We say a set $B$ is \emph{balanced} if $B$ contains an equal number of 1s and 0s of $f$, is \emph{positively imbalanced} if $B$ contains more 1s of $f$ than 0s of $f$ and \emph{negatively imbalanced} if $B$ contains more 0s of $f$ than $1$s of $f$. The \emph{imbalance} of $f$ over a set $B$ is the unsigned difference between the number of 1s of $f$ and $|B|/2$.
	
	Before we prove Theorem \ref{thm:random}, let us first sketch the ideas behind the proof without worrying about the technicalities.
	
	\begin{proof}[Sketch proof of Theorem \ref{thm:random}]
		We start by partitioning the hypercube $\{0,1\}^n$ into ``blocks'' which contain a large polynomial number of points (say $n^{\vara}$), have bounded diameter and contain an equal number of 1s and 0s of $\dic$. These blocks are small in comparison to the cube so, in a given block $B$, we expect the number of $1$s of $f$ to have a distribution similar to that of a binomial random variable with $|B|$ trials and success probability $1/2$. This means we expect every block to have an imbalance not much more than $n^{\vara/2}$ and a large proportion (tending to 1) of the blocks to have an imbalance not much less than $n^{\vara/2}$. We call a sequence of distinct blocks $B_1, \dots, B_k$ a \emph{block path} if, for $i= 1, \dots, k-1$, there is an edge between a vertex of $B_i$ and a vertex of $B_{i+1}$. In our construction, we take a large set of random block paths in the hypercube and use these to move the imbalance. If a block $B_1$ has too many $1$s of $f$, we find a block path $B_1, \dots, B_k$ from $B_1$ to a block $B_k$ with too many $0$s and map a $0$ of $\dic$ from $B_1$ to a $0$ of $f$ from $B_2$, a $0$ of $\dic$ from $B_2$ to a $0$ of $f$ from $B_3$, and so on. We also do the same with the $1$s but in the opposite direction. That is, we map a $1$ of $\dic$ from $B_2$ to a $1$ of $f$ from $B_1$, map a $1$ of $\dic$ from $B_3$ to a $1$ of $f$ from $B_2$, and so on. By doing this along enough paths, we can even out the sets and then arbitrarily match within them.
	
	For this to work we need to make sure that we don't try to map too many points to a block, which means ensuring there aren't too many paths through any single block. However, we also need to make sure there are enough paths between the blocks to spread the imbalance around. Once we have chosen our paths (at random) we construct a bipartite graph between the positive and negative blocks and find a subgraph with suitable degrees. This corresponds to the paths that will actually be used to move the imbalance around. The imbalance is not much more than $n^{\vara/2}$ in any block, so we don't need too many paths; at the same time, the imbalance is not much less than $n^{\vara/2}$ in most blocks, which ensures that there are enough random paths between imbalanced sets.
	\end{proof}
	
	Our proof of Theorem \ref{thm:random} uses the fact that, with high probability, there is a perfect matching in our random bipartite graph. The following lemma is an immediate consequence of a result of Erd\H{o}s and R\'enyi \cite{Erdos1963}. (The lemma can easily be strengthened, but this form is sufficient for our case.)
	
	\begin{lemma}\label{lem:bipartite}
		Let $A$ be a multiset of at least $2n \log n$ pairs $(i,j)$ each chosen uniformly and independently at random from $[n] \times [n]$. Let $G$ be the random bipartite graph with vertex set $\{v^+_1, \dots, v^+_n, v^-_1, \dots, v^-_n\}$, where $v^+_iv^-_j$ is an edge if and only if $(i,j) \in A$. Then the probability that $G$ contains a perfect matching tends to 1 as $n \to \infty$.
	\end{lemma}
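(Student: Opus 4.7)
The plan is to deduce the lemma from the classical Erd\H{o}s--R\'enyi threshold for a perfect matching in a bipartite random graph with independent edges. The only technical issue is that the edges of $G$ are not independent---pairs are drawn with replacement, so repeated samples create correlations---and this will be handled by a short Poissonisation coupling.

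Fix a small constant $\delta > 0$, let $m \ge 2n\log n$ denote the size of the multi-set $A$, and set $m' = (1-\delta)m$. Take an i.i.d.\ sequence $U_1, U_2, \dots$ of pairs distributed uniformly on $[n]\times [n]$ and, independently, a random variable $M \sim \mathrm{Poisson}(m')$. Form two bipartite graphs on the vertex set $\{v_1^+,\dots,v_n^+\} \cup \{v_1^-,\dots,v_n^-\}$: the \emph{Poissonised graph} $G'$ with edge (multi-)set $\{U_1,\dots,U_M\}$, and the sampled graph $G$ from the lemma, with edge (multi-)set $\{U_1,\dots,U_m\}$. On the event $\{M \le m\}$, every edge of $G'$ is also an edge of $G$, so any perfect matching of $G'$ is automatically one of $G$.

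I would then exploit the standard Poissonisation identity: unconditionally on $M$, the multiplicities of distinct pairs in $\{U_1,\dots,U_M\}$ are independent $\mathrm{Poisson}(m'/n^2)$ random variables, and hence the edge indicators of $G'$ are i.i.d.\ Bernoulli with parameter $p := 1 - e^{-m'/n^2}$. Using $1 - e^{-x} \ge x - x^2/2$ together with $m' \ge 2(1-\delta) n\log n$ gives $np \ge (2 - 3\delta)\log n$ for $n$ large, comfortably above the bipartite threshold $\log n / n$. The Erd\H{o}s--R\'enyi theorem \cite{Erdos1963} then guarantees that $G'$ contains a perfect matching with probability $1 - o(1)$.

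To finish, a Chernoff bound for the Poisson distribution yields $P(M > m) \le \exp(-\Omega(m)) = o(1)$ (since $m' = (1-\delta)m < m$), and therefore
\[
P(G \text{ contains a perfect matching}) \ge P(G' \text{ contains a perfect matching}) - P(M > m) \longrightarrow 1.
\]
The only delicate step in this plan is verifying the Poissonisation factorisation; once that is in hand, the rest is a routine combination of Chernoff concentration and the classical threshold, so I do not expect any real obstacles.
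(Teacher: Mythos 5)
Your argument is correct and follows the same route the paper intends: the paper gives no proof of this lemma, simply declaring it an immediate consequence of the Erd\H{o}s--R\'enyi matching theorem \cite{Erdos1963}. Your Poissonisation coupling rigorously supplies the model-transfer step (from i.i.d.\ sampled pairs to independent edge indicators) that the paper leaves implicit, and each step -- the independence of the Poisson multiplicities, the bound $np \geq (2-3\delta)\log n$, and the tail bound on $M$ -- checks out.
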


	We will also make use of the following Chernoff bounds; see \cite{mitzenmacher2005probability} for a discussion and derivation of these bounds. 
	\begin{lemma}\label{lem:chernoff}
		Let $X \sim \bin(n,p)$ and $t > 0$. Then
		\begin{align*}
		\mathbb{P} \left( X \geq np + t \right) \leq e^{-2t^2/n} .
		\intertext{Also, for $0 \leq \eps \leq 1$ we have}
		\mathbb{P} \left(X\geq (1+\eps) np \right) \leq e^{-\eps^2 np /4}
		\intertext{and}
		\mathbb{P} \left(X \leq (1-\eps) np \right) \leq e^{-\eps^2 np /2}.
		\end{align*}
	\end{lemma}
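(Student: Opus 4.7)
The plan is to apply the standard Chernoff technique: write the tail probability as an expectation of an indicator, bound it via Markov's inequality applied to an exponential moment, factor the resulting moment generating function using independence, and then optimise the free parameter.

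First I would write $X = \sum_{i=1}^n X_i$ where the $X_i$ are independent Bernoulli$(p)$ random variables. For any $\lambda > 0$, Markov's inequality applied to $e^{\lambda X}$ gives
\[ \mathbb{P}(X \geq np + t) \leq e^{-\lambda(np+t)} \, \mathbb{E}[e^{\lambda X}] = e^{-\lambda(np+t)} \bigl(1 + p(e^\lambda - 1)\bigr)^n \leq \exp\bigl(-\lambda(np+t) + np(e^\lambda - 1)\bigr), \]
using independence and the inequality $1+x \leq e^x$ in the last step. This single display is the starting point for all three bounds in the lemma.

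For the first (Hoeffding-type) bound, rather than optimise this directly I would re-centre the variables as $Y_i = X_i - p \in [-p, 1-p]$ and invoke Hoeffding's lemma to obtain $\mathbb{E}[e^{\lambda Y_i}] \leq e^{\lambda^2/8}$. Combining factors gives $\mathbb{P}(X - np \geq t) \leq e^{-\lambda t + n\lambda^2/8}$, and choosing $\lambda = 4t/n$ minimises the exponent and yields the stated bound $e^{-2t^2/n}$.

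For the multiplicative forms, I would return to the first display, set $t = \epsilon np$ and substitute $\lambda = \ln(1+\epsilon)$ to obtain
\[ \mathbb{P}\bigl(X \geq (1+\epsilon)np\bigr) \leq \bigl((1+\epsilon)^{-(1+\epsilon)} e^\epsilon\bigr)^{np}, \]
whence the bound $e^{-\epsilon^2 np/4}$ follows from the elementary inequality $(1+\epsilon)\ln(1+\epsilon) - \epsilon \geq \epsilon^2/4$ for $0 \leq \epsilon \leq 1$. For the lower tail I would repeat the argument with $\mathbb{P}(X \leq a) = \mathbb{P}(e^{-\lambda X} \geq e^{-\lambda a})$ in place of Markov's inequality, which reduces matters to the companion inequality $(1-\epsilon)\ln(1-\epsilon) + \epsilon \geq \epsilon^2/2$, verified by a Taylor expansion.

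There is no real obstacle here: the result is entirely textbook and the paper itself cites \cite{mitzenmacher2005probability} for a derivation. The only mild care required is in the two elementary calculus inequalities that convert the exact Chernoff factor $(1+\epsilon)^{-(1+\epsilon)} e^\epsilon$ (and its lower-tail analogue) into the clean exponential form stated in the lemma.
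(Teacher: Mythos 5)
Your proof is correct and is precisely the standard textbook derivation (exponential moment plus Markov, Hoeffding's lemma for the additive form, and the elementary inequalities $(1+\epsilon)\ln(1+\epsilon)-\epsilon \geq \epsilon^2/4$ and $(1-\epsilon)\ln(1-\epsilon)+\epsilon \geq \epsilon^2/2$ for the multiplicative forms). The paper offers no proof of its own, simply citing \cite{mitzenmacher2005probability}, and your argument is exactly the derivation that reference supplies, so there is nothing to compare beyond noting that all three of your optimisations and calculus inequalities check out.
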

	
	We are now ready to prove the main theorem. Throughout this proof we will use the constants $\vara$, $\varb$, $\varc$ and $\vard$ to which we assign explicit values only at the end of the proof.
	
	\begin{proof}[Proof of Theorem \ref{thm:random}]
		Suppose first that we have a mapping $\phi: \{0,1\}^n \to \{0,1\}^n$ from $\dic$ to $f$ such that $\dist(x, \phi(x)) \leq C$ for all $x \in \{0,1\}^n$.	Fix $x \in \{0,1\}^n$ and let $y = x + e_i$ for some $i$. Then 
		\[\dist(\phi(x), \phi(y)) \leq \dist(x, \phi(x)) + \dist(x,y) + \dist(y, \phi(y)) \leq 2C + 1 \]
		and the triangle inequality shows $\phi$ is $(2C+1)$-Lipschitz. A similar argument shows that $\phi^{-1}$ is also $(2C+1)$-Lipschitz and this proves the latter part of the theorem. Thus it remains to show that such a $\phi$ exists with high probability.

		We shall assume for the rest of the argument that $n$ is large enough for our estimates to hold. Let $\vara > 2$ be a constant. We start by partitioning the hypercube $\{0,1\}^n$ into ``blocks'' with diameter at most $4\vara$ which contain between $n^{\vara -2}$ and $4n^{\vara}$ elements, exactly half of which are 1s of $\dic$. Let $B$ be a maximal set of points in $\{0,1\}^{n-1}$ such that the pairwise distance between any two points is at least $2\vara$. For each point $v \in B$, we define the set of points $A_v$ to be the points in $\{0,1\}^{n-1}$ which are closer to $v$ than to any other point in $B$ (settling ties arbitrarily). As the set $B$ is maximal, the radius of $A_v$ must be less than $2\vara$, or else we could add a point to $B$. As the distance from $v$ to any other point in $B$ is at least $2\vara$, $A_v$ contains a Hamming ball of radius $\vara - 1$ centred at $v$ and at least $n^{\vara -2}$ points. If a set contains $N$ elements with  $N > 2n^{\vara}$, we arbitrarily split the set into $\floor{N/n^{\vara}}$ sets each containing between $n^{\vara}$ and $2n^{\vara}$ points.
		For each of our sets $A$, we define a corresponding ``block'' in $\{0,1\}^n$ to be $\{0,1\} \times A$. Clearly this gives a set $\mathcal{B}$ of blocks with the desired properties. Note that $2^n/(4n^{\vara})\leq |\mathcal{B}| \leq 2^n/n^{\vara -2}$.
		
		The following claim shows that, with probability tending to 1, no block $B \in \mathcal{B}$ has an imbalance much more than $n^{\vara/2}$.

	\begin{claim}\label{claim:binom}
		Provided that $b > 1/2$, with probability tending to 1, none of the sets $B \in \mathcal{B}$ have an imbalance of more than $n^{\vara/2 + \varb}$.
	\end{claim}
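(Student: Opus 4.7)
The plan is to reduce this to a standard Chernoff plus union bound argument.

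First, I would rephrase the randomness. A uniformly random balanced $f$ corresponds to choosing the $1$-set $S := f^{-1}(1) \subseteq \{0,1\}^n$ uniformly among subsets of size $2^{n-1}$, so for a fixed block $B \in \mathcal{B}$ of size $N$ (with $n^{40} \le N \le 4n^{42}$), the count $|S \cap B|$ is hypergeometric with mean $N/2$ and the imbalance of $B$ equals $\bigl||S \cap B| - N/2\bigr|$. The goal is then to bound the probability that this quantity exceeds $n^{22}$.

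Since the paper states its Chernoff bounds only for the binomial distribution, the cleanest way to get a tail bound is to pass to the unconditional measure under which each coordinate of $f$ is an independent $\mathrm{Bernoulli}(1/2)$. Under that measure $|S \cap B| \sim \bin{N,1/2}$, and applying Lemma~\ref{lem:chernoff} with $t = n^{22}$ gives
\[
\mathbb{P}\bigl(\bigl||S \cap B| - N/2\bigr| \ge n^{22}\bigr) \le 2\, e^{-2 n^{44}/N} \le 2\, e^{-n^{2}/2},
\]
using $N \le 4n^{42}$. Conditioning on $f$ being balanced --- an event of probability $\Theta(2^{-n/2})$ by Stirling applied to $\binom{2^n}{2^{n-1}}2^{-2^n}$ --- inflates any event's probability by at most a factor of $O(2^{n/2})$, yielding a per-block failure probability of $O(2^{n/2}\, e^{-n^{2}/2})$.

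Finally, I would union-bound over the at most $2^n/n^{40}$ blocks in $\mathcal{B}$, obtaining a total failure probability of $O(2^{3n/2}\, n^{-40}\, e^{-n^{2}/2})$, which tends to $0$. There is no real obstacle here: the subgaussian tail $e^{-n^{2}/2}$ comfortably absorbs both the conditioning factor and the union-bound factor, and this is precisely the payoff for having taken the block size to be at least $n^{40}$ while setting the imbalance threshold at $n^{22}$, which is polynomially larger than the typical deviation $\sqrt{|B|} \le 2n^{21}$. A slicker alternative is to apply Hoeffding's inequality for the hypergeometric distribution directly, which gives the same subgaussian bound without needing the conditioning step at all.
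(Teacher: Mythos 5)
Your argument is correct and is essentially identical to the paper's: both compare the balanced $f$ to an unconditioned i.i.d.\ random function $g$, bound the conditional probability by the unconditional one divided by $\mathbb{P}(g \text{ balanced}) = \Theta(2^{-n/2})$, apply the first Chernoff bound of Lemma~\ref{lem:chernoff} with $t = n^{22}$ to get $2e^{-2n^{44}/N} \leq 2e^{-n^2/2}$, and finish with a union bound over the at most $2^n/n^{40}$ blocks. The remark about Hoeffding for the hypergeometric distribution is a valid (and indeed slightly cleaner) alternative, but it is not needed.
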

	\begin{proof}
		Say that a block $B$ is \emph{bad} under a function $f$ if $B$ has an imbalance greater than $n^{\vara/2 + \varb}$. Let $g: \{0,1\}^n \to \{0,1\}$ be a random boolean function where $g(x)$ is chosen independently and uniformly at random for each $x \in \{0,1\}^n$. Then the probability that $B$ is bad under $f$ is the same as the probability that $B$ is bad under $g$ conditioned on the event that $g$ is balanced, so
		\begin{align*}
		\mathbb{P} \left( B \text{ bad under $f$} \right) = \mathbb{P} \left(\left.  B \text{ bad under $g$} \right| g \text{ balanced} \right)
		\leq \frac{\mathbb{P} \left(B \text{ bad under $g$}  \right)}{\mathbb{P} \left( g \text{ balanced} \right)}.
		\end{align*}
		 The number of 1s of $g$ in $B$ is a binomial random variable so the first Chernoff bound in Lemma \ref{lem:chernoff} shows the probability that $B$ is bad under $g$ is at most $2e^{-2n^{\vara + 2\varb}/N} \leq 2e^{-n^{2\varb}/2}$. The probability that $g$ is balanced is $2^{-n} \binom{2^{n}}{2^{n-1}} \sim (\pi 2^{n-1})^{-1/2}$. Using the union bound over all blocks $B \in \mathcal{B}$, the probability that at least one of the sets $B$ is bad is at most $2^{3n/2} e^{-n^{2\varb}/2}$ for all sufficiently large $n$, and this is $o(1)$ provided $\varb > 1/2$.
	\end{proof}
	We will also need that most sets have an imbalance not much less than $n^{\vara/2}$.
	\begin{claim}\label{claim:binomSmall}
		Provided that $c > 2$, with probability tending to 1, the sum over $B \in \mathcal{B}$ of the imbalance of $B$ is at least $2^{n}/(8n^{\vara/2 + \varc})$.
	\end{claim}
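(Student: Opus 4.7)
The plan is to mimic the strategy of Claim~\ref{claim:binom}: pass from the random balanced function $f$ to the fully random function $g: \{0,1\}^n \to \{0,1\}$ (with each $g(x)$ independent and uniform) conditioned on $g$ being balanced. Since $\mathbb{P}(g \text{ balanced}) = \Theta(2^{-n/2})$, any event of probability $o(2^{-n/2})$ under $g$ has probability $o(1)$ under $f$, so it suffices to prove the corresponding bound for $g$.

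Under $g$, the imbalance $I_B$ of a block $B$ is $|Y - |B|/2|$ with $Y \sim \bin(|B|, 1/2)$. The standard formula for the mean absolute deviation of a symmetric binomial (via Stirling) gives $\mathbb{E}_g[I_B] \sim \sqrt{|B|/(2\pi)}$, and in particular $\mathbb{E}_g[I_B] \geq c\sqrt{|B|} \geq c\, n^{20}$ for an absolute constant $c > 0$, using $|B| \geq n^{40}$. Since the blocks are disjoint, the random variables $\{I_B\}_{B \in \mathcal{B}}$ are independent under $g$, and $\mathrm{Var}_g(I_B) \leq \mathbb{E}[(Y-|B|/2)^2] = |B|/4$. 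Summing over all blocks,
\[
\mu := \mathbb{E}_g\!\Bigl[\sum_{B \in \mathcal{B}} I_B \Bigr] \;\geq\; c\,|\mathcal{B}|\,n^{20} \;\geq\; \frac{c \cdot 2^n}{4\, n^{22}},
\qquad
\mathrm{Var}_g\!\Bigl(\sum_{B \in \mathcal{B}} I_B \Bigr) \;\leq\; \tfrac{1}{4}\sum_{B \in \mathcal{B}} |B| \;=\; 2^{n-2}.
\]

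Chebyshev's inequality then yields $\mathbb{P}_g\bigl(\sum_B I_B \leq \mu/2\bigr) \leq 4\cdot 2^{n-2}/\mu^2 = O(n^{44}/2^{n})$, which is $o(2^{-n/2})$. Transferring this bound to $f$ by dividing through by $\mathbb{P}(g \text{ balanced}) = \Theta(2^{-n/2})$ gives a bound of $o(1)$, and since $\mu/2 \geq 2^n/(8\,n^{24})$ for sufficiently large $n$, this is exactly the claim. There is no serious obstacle: the only thing to check is that the polynomial slack between the target $\Theta(2^n/n^{24})$ and the expected total $\Theta(2^n/n^{22})$ is enough to make the Chebyshev bound $O(\mathrm{Var}/\mu^2)$ beat the $\Theta(2^{n/2})$ loss from conditioning on $g$ balanced, and this is comfortable because the bound under $g$ is in fact exponentially small in $n$.
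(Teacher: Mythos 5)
Your proof is correct, but it takes a genuinely different route from the paper's. The paper works directly in the conditional (balanced) measure: the number of $1$s of $f$ in a block $B$ of size $N$ is hypergeometric, each point probability $\binom{N}{N/2+i}\binom{2^n-N}{2^{n-1}-N/2-i}\big/\binom{2^n}{2^{n-1}}$ is bounded by $O(N^{-1/2}) = O(n^{-20})$, so a single block has imbalance at most $n^{18}$ with probability only $O(1/n^{2})$; Markov's inequality applied to the number of low-imbalance blocks then shows all but a $1/n$-fraction of blocks have imbalance at least $n^{18}$, which already gives the stated total. You instead work in the unconditioned product measure, use independence of the disjoint blocks to concentrate the total imbalance around its mean $\Theta(2^n/n^{22})$ via Chebyshev (with the mean absolute deviation $\sim\sqrt{|B|/(2\pi)}$ of the symmetric binomial supplying the lower bound on the mean), and then pay the $\Theta(2^{n/2})$ cost of conditioning on balancedness. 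Your arithmetic checks out: the Chebyshev bound $O(n^{44}/2^{n})$ comfortably beats the conditioning loss, and $\mu/2 \geq 2^n/(8n^{24})$ for large $n$. Your version actually yields an exponentially small failure probability where the paper's Markov step gives only $O(1/n)$; the paper's version, in exchange, avoids both the conditioning transfer and the binomial mean-deviation estimate, needing only a point-mass bound on the hypergeometric distribution.
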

	\begin{proof}
		Fix $B$ and let $|B| = N$. We can then write down the probability that the imbalance of $B$ is at most $n^{\vara/2  - \varc}$ as
		\[ \sum_{i = -n^{\vara/2 - \varc}}^{n^{\vara/2 - \varc}} \frac{ \binom{N}{N/2 + i} \binom{2^n - N}{2^{n-1} - N/2 - i}}{\binom{2^n}{2^{n-1}}}. \]
		Using $\binom{2n}{n} \sim \frac{4^n}{\sqrt{\pi n}}$ and that $N = o(2^n)$, the largest term ($i=0$) is $O(N^{-1/2})$. As $N \geq n^{\vara - 2}$, each term is $O(n^{- (\vara/2 - 1)})$ and the sum is $O(1/n^{\varc - 1})$. Let $X$ be the number of blocks with imbalance at most $n^{\vara/2 - \varc}$. Then,
		\[ \expec{X} =  O \left(\frac{|\mathcal{B}|}{n^{\varc - 1}} \right) \]
		and so, by Markov's Inequality, $\mathbb{P}\left( X \geq |\mathcal{B}|/n\right) = O(1/n^{\varc - 2})$. This is $o(1)$ provided $\varc > 2$, and in this case, with probability tending to 1, the sum of the imbalances is at least $(2^n/(4n^{\vara})) \cdot (1 - 1/n) \cdot n^{\vara/2 - \varc} \geq 2^{n}/(8n^{\vara/2 + \varc})$ for large $n$. 
	\end{proof}
	Now suppose that every block has an imbalance of at most $n^{\vara/2 + \varb}$ and that the total imbalance is at least $2^n/(8n^{\vara/2 + \varc })$.
	
    Independently sample $K = \lfloor{2^n/n^{\vard}}\rfloor$ uniformly random pairs of vertices in the hypercube and a random shortest path between them.
	\begin{claim}\label{claim:pathsHit}
		Provided that $a + 1 -d > 1$, with high probability, the number of paths in $K$ which intersect a given block $B \in \mathcal{B}$ is at most $8 n^{\vara + 1 - \vard}$.
	\end{claim}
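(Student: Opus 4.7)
The plan is to estimate the expected number of sampled paths that hit a fixed block $B$, apply Chernoff concentration across the $K$ independent paths, and then take a union bound over the $|\mathcal{B}| \leq 2^n$ blocks.

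The key first step is to compute, for every vertex $v \in \{0,1\}^n$, the probability that a single random shortest path $P$ (between uniform endpoints $x,y$) contains $v$. I would do this by symmetry: the joint distribution of $(x,y)$ is invariant under translations of $\{0,1\}^n$, and sampling a uniformly random shortest path commutes with translations, so $\mathbb{P}[v \in P]$ does not depend on $v$. Summing $\sum_{v \in \{0,1\}^n} \mathbb{1}[v \in P] = \dist(x,y) + 1$ and taking expectations (using $\expec{\dist(x,y)} = n/2$) yields the constant value $(n/2+1)/2^n$. Consequently, letting $I_j = \mathbb{1}[V(P_j) \cap B \neq \emptyset] \leq \sum_{v \in B} \mathbb{1}[v \in P_j]$, linearity gives
\[ \expec{\textstyle \sum_{j=1}^{K} I_j} \leq K \cdot |B| \cdot \frac{n/2+1}{2^n} \leq \frac{2^n}{n^{4}} \cdot 4 n^{42} \cdot \frac{n}{2^n} = 4 n^{39} \]
for all sufficiently large $n$, using the bounds $K \leq 2^n/n^4$ and $|B| \leq 4 n^{42}$ established earlier.

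Since the $K$ paths are sampled independently, the $I_j$ are independent Bernoulli variables and $8 n^{39}$ is at least twice the mean of $X := \sum_j I_j$. After a routine stochastic domination that couples $X$ with a sum $X'$ of independent Bernoullis having mean exactly $4 n^{39}$ (adjust the per-trial success probability upward so as to sit in the $\epsilon \leq 1$ regime of Lemma \ref{lem:chernoff}), Chernoff gives $\mathbb{P}[X \geq 8 n^{39}] \leq \mathbb{P}[X' \geq 8 n^{39}] \leq e^{-n^{39}}$. A union bound over the at most $2^n$ blocks in $\mathcal{B}$ then gives an overall failure probability of at most $2^n e^{-n^{39}} = o(1)$. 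The only mildly subtle point is the symmetry calculation of $\mathbb{P}[v \in P]$: a direct computation, averaging the weights $1/\binom{\dist(x,y)}{\dist(x,v)}$ over pairs $(x,y)$, would be cumbersome, so the translation-invariance observation is the key trick that makes the argument clean.
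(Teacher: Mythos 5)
Your proposal is correct and follows essentially the same route as the paper: bound the per-vertex hitting probability by symmetry, union-bound over the vertices of $B$ to get expected count at most $4n^{39}$, apply the second Chernoff bound via domination by a binomial with mean exactly $4n^{39}$, and union-bound over blocks. Your version is slightly more explicit about the translation-invariance computation and the domination step, but the argument is the same.
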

	\begin{proof}
			By symmetry every point is equally likely to be on a random shortest path and, as any shortest path has at most $n$ vertices, the probability that a given vertex is on each sampled path is at most $n/2^n$. Using the union bound the probability that a given random path goes through a fixed $B \in \mathcal{B}$ is at most $n |B|/2^n$. As each path is sampled independently the number of paths through a fixed $B$ is dominated by a binomial distribution with $K$ trials and success probability $4n^{\vara + 1}/2^n$. Applying the second Chernoff bound from Lemma \ref{lem:chernoff} with $\eps = 1$, the probability that there are more than $8 n^{\vara + 1 - \vard}$ paths through a given block is at most $\exp( - n^{\vara + 1 - \vard}/2)$ and, taking the union bound, the probability that any $B$ intersects more than $8 n^{\vara + 1 - \vard}$ paths is at most $ \exp( - n^{\vara + 1 - \vard}/2) \cdot 2^n/n^{\vara - 2}$. This tends to 0 as $n \to \infty$ provided that $a + 1 - d > 1$.
	\end{proof}

	We now form a random bipartite graph $G$ with vertex sets $V^+$ and $V^-$ as follows. If $B \in \mathcal{B}$ has a positive imbalance of $i$, we add $i$ \emph{positive vertices} $v^B_1, \dots, v^B_{i}$ to $V^+$ and similarly if $B$ has a negative imbalance of $i$, we add $i$ \emph{negative vertices} to $V^-$. We generate the edge set of $G$ by examining each of our random paths in turn. Given a path, say between $v_1 \in B_1$ and $v_2 \in B_2$, we do the following:
	\begin{itemize}
		\item Discard the path with probability $1 - n^{2\vara - 4}/(|B_1|\cdot |B_2|)$. 
		\item If the path hasn't been discarded, independently choose two numbers $i,j$ in $[n^{\vara/2 + b}]$ and say that the edge $v_i^{B_1}v_j^{B_2}$ is present if both vertices exist and have opposite signs.
	\end{itemize}
	
	\begin{claim}
		The graph $G$ has a perfect matching $M$ with high probability provided 
			\begin{equation}\label{eqn:varbound}
				a/2 > 3b + 2c + d + 7.
			\end{equation}
	\end{claim}
	\begin{proof}
        The probability that a random path gives any particular edge between $V^+$ and $V^-$ is $p = 2(n^{\vara/2 - 2 - \varb}/2^n)^2$ and, in particular, every edge is equally likely. Hence, the number of edges in the multiset induced by the sampled random paths has a binomial distribution with $K$ trials and success probability $|V^+|^2 p$. By assumption $|V^+| \geq 2^n/(16n^{\vara/2 + \varc})$ and so $|V^+|^2p \geq (128n^{2 \varb + 2 \varc + 4})^{-1}$. Using the third Chernoff bound in Lemma \ref{lem:chernoff}, the probability the multiset contains at most $2^{n-9}/n^{ 2\varb  + 2 \varc + \vard + 4}$ pairs is at most $\exp ( - 2^{n-11}/n ^{2\varb  + 2 \varc + \vard + 4})$. There are at most $2^n/n^{\vara -2}$ blocks each with imbalance at most $n^{\vara/2 + \varb}$ so $|V^+| \leq 2^n/n^{\vara/2 - \varb - 2}$. Hence, $2|V^+| \log |V^+| \leq 2^{n+1}/n^{\vara/2 - \varb - 3}$. From (\ref{eqn:varbound}), for large enough $n$,
		\begin{equation*}
			2^{n+1}/n^{\vara/2 - \varb -3} \leq 2^{n-9}/n^{2\varb + 2 \varc + \vard + 4}, 
		\end{equation*}
		and we have that $G$ contains a perfect matching $M$ with high probability by Lemma \ref{lem:bipartite}.
	\end{proof}

	Given the matching $M$ we can greedily construct the map $\phi$. We start with all vertices as both unset and unused. During the construction we will let $\phi(x) = y$ for some $x, y \in \{0,1\}^n$ and we will then say that $x$ is \emph{set} and $y$ is \emph{used}. The vertices for which we still have to define $\phi$ are exactly the unset vertices, and the vertices which are not yet in the image of $\phi$ are exactly the unused vertices.
	
	Each edge in $M$ corresponds to a path in the hypercube and this induces a walk between two blocks in the block decomposition. By removing any loops, we can say the edge corresponds to a path $P = B_{i_1}\dots B_{i_k}$ in the block decomposition where $B_{i_1}$ has positive imbalance and $B_{i_k}$ has negative imbalance. For $j = 1, \dots, k-1$, we choose any point $x \in B_{i_j}$ with $\dic(x) = 0$ which is currently unset and any point $y \in B_{i_{j+1}}$ with $f(y)  = 0$ which is currently unused and set $\phi(x) = y$, setting $x$ and using $y$. We also choose any point in $x' \in B_{i_{j+1}}$ with $\dic(x) = 1$ which is currently unset and any point $y' \in B_{i_j}$ with $f(y) = 1$ and set $\phi(x') = y'$. In order to guarantee there will always be enough points to do the above, we will enforce that $8n^{\vara + 1 - d} \leq n^{\vara - 2}$. 
	
	If $B$ has positive imbalance $i$, there are $i$ paths starting at $B$ (those corresponding to the edges in the matching using $v^B_1, \dots, v^B_i$) and hence, after processing all the paths which start at $B$, the number of unset 1s of $\dic$ in $B$ is still $|B|/2$ while the number of $0$s is now $|B|/2 - i$. Similarly the number of unused 1s of $f$ in $B$ is now $|B|/2$ while the number of $0$s is still $|B|/2 - i$. The paths for which $B$ is an internal block  reduce all four quantities by 1 so, after processing all the paths, the number of 1s and 0s of $\dic$ which are unset and the number of 1s and 0s of $f$ which are unused, are equal. This means we can greedily complete $\phi$ inside $B$ by taking any unset $x$ and any unused $y$ with $\dic(x) = f(y)$ and setting $\phi(x) = y$. A similar argument works when $B$ is balanced or has negative imbalance.
	
	It is clear that, provided nothing goes wrong, this construction gives a bijection $\phi: \{0,1\}^n \to \{0,1\}^n$ which maps $\dic$ to $f$. If $x \in B$, then either $\phi(x) \in B$ or $\phi(x) \in B'$ for some $B'$ for which there is an edge between $B$ and $B'$ in the hypercube. As the diameter of a block is bounded by $4\vara$, the distance between $x$ and $\phi(x)$ is bounded by $8\vara + 1$. 
	
	This construction can fail if any block has too large an imbalance, the total imbalance is too small, there are too many paths through a block, there aren't enough edges in the multiset or there isn't a matching in $G$; but all of these events happen with probability tending to 0 (for suitable choices of $\vara$, $\varb$, $\varc$ and $\vard$).
	
	To complete the proof it remains to find suitable values for the constants $\vara, \varb, \varc$ and $\vard$. From Claim \ref{claim:binom} we must have $b > 1/2$ and from Claim \ref{claim:binomSmall} we must have $c > 2$. We will also need $8n^{\vara + 1 - d} \leq n^{\vara - 2}$, which follows for large enough $n$ provided $d > 3$. Finally, we require $a > 2$, $a + 1 - d > 1$ and, from Claim \ref{claim:pathsHit}, that $a/2 > 3b + 2c + d + 7$. All these constraints can be satisfied by taking $a = 42$, $b = 1$, $c = 3$ and $d = 4$.
	\end{proof}
	
	\section{Open Problems}\label{sec:problemsLip}
	We saw in Theorem \ref{thm:dictoxor} that it is not possible for a map from $\dic$ to $\xor$ to be $O(1)$-Lipschitz, $O(1)$-local and also have an inverse that is $O(1)$-Lipschitz. On the other hand we saw that maps exist if we drop the condition on the inverse or on the locality. Can we drop the first condition while keeping the other two?
	\begin{problem}
		Is there a mapping $\phi: \{0,1\}^n \to \{0,1\}^n$ from $\dic$ to $\xor$ such that each output bit depends on $O(1)$ input bits and such that $\phi^{-1}$ is $O(1)$-Lipschitz?
	\end{problem}
	Corollary \ref{cor} shows that, with high probability, if $A,B \subseteq \{0,1\}^n$ are two independent uniformly random sets with $|A| = |B| = 2^{n-1}$, there exists a $O(1)$-bi-Lipschitz bijection $\phi: \{0,1\}^n \to \{0,1\}^n$ such that $\phi(A) = B$,  but we made no attempt to optimise the constant. We expect a much smaller constant to hold in this corollary (and in Theorem \ref{thm:random}), possibly even as small as 2 or 3. How small could the constant be?

	\begin{problem}
		Let $A,B \subseteq \{0,1\}^n$ be two independent uniformly random sets with $|A| = |B| = 2^{n-1}$. What is the smallest constant $C$ such that, with high probability, there is a $C$-bi-Lipschitz bijection $\phi:\{0,1\}^n \to \{0,1\}^n$ such that $\phi(A) = B$?
	\end{problem}

	There are many interesting variations of this problem. For example, what happens if we let $|A| = |B| = f(n)$ for some function $f(n)$? 
		
	\begin{problem}
		Let $A,B \subseteq \{0,1\}^n$ be two independent uniformly random sets with $|A| = |B| = f(n)$. For what functions $f: \mathbb{N} \to \mathbb{N}$ does there exist a $C$ such that, with high probability, there is a $C$-bi-Lipschitz bijection $\phi:\{0,1\}^n \to \{0,1\}^n$ with $\phi(A) = B$? For example, what happens when $f(n) = \Theta \left( 2^n / n \right)$?
	\end{problem}

	A closely related problem concerns colourings. If we view the partition of $\{0,1\}^n$ into 2 parts ($A$ and $A^c$) as a partition into 2 colour classes, then we can view mappings as relabellings of the cube which respect two balanced 2-colourings. What happens if we instead colour the cube with $k$ colours? If $k$ is a constant, then a modification of the argument used to prove Theorem \ref{thm:random} will show that there is an $O(1)$-bi-Lipschitz mapping with high probability. But what happens if we let $k \to \infty$ as $n \to \infty$?
	
	We say that a partition of $\{0,1\}^n$ into $k$ parts $A_1, \dots, A_k$ is a \emph{balanced partition} if $|A_1| \leq |A_2| \leq \dotsb |A_k| \leq |A_1| + 1$.
	 
	 \begin{problem}
	 	Let $A_1, \dots, A_k$ and $B_1, \dots, B_k$ be two independent uniformly random balanced partitions of $\{0,1\}^n$ into $k(n)$ parts. For what functions $k : \mathbb{N} \to \mathbb{N}$ does there exist a constant $C$ such that, with high probability, there is a $C$-bi-Lipschitz bijection $\phi : \{0,1\}^n \to \{0,1\}^n$ with $\phi(A_i)= B_i$ for all $i$? How small can $C$ be?
	 \end{problem}
 
 	The problems above have been concerned with functions on the hypercube but we can ask similar questions for functions on other graphs and, in particular, Cayley graphs. For example, what happens for $\mathbb{Z}_n \times \mathbb{Z}_n$ generated by the elements $(1,0)$ and $(0,1)$? In this case, we say that a map $\phi : \mathbb{Z}_n \times \mathbb{Z}_n\to \mathbb{Z}_n \times \mathbb{Z}_n$ is \emph{$C$-Lipschitz} if, for every $x,y \in V$, $d(\phi(x), \phi(y)) \leq C d(x,y)$ where $d$ is the graph distance in the Cayley graph. The proof of Theorem \ref{thm:random} relied on a decomposition of $\{0,1\}^n$ into ``blocks'', each with bounded radius but containing a large polynomial number of points. In $\mathbb{Z}_n \times \mathbb{Z}_n$ any subset with bounded radius must contain a bounded number of points and the ``blocks'' won't tend towards being relatively balanced.
 \begin{problem}
	Let $A,B \subseteq \mathbb{Z}_n \times \mathbb{Z}_n$ be two independent uniformly random sets with $|A| = |B| = \ceil{n^2/2}$. What is the best $C = C(n)$ such that, with high probability, there is a $C$-bi-Lipschitz bijection $\phi:\mathbb{Z}_n \times \mathbb{Z}_n \to \mathbb{Z}_n \times \mathbb{Z}_n$ with $\phi(A) = B$?
 \end{problem}

 	In a different direction, we observe that there is a natural symmetric measure of the difference between two boolean functions. Define the \emph{Lipschitz constant} of a mapping $\phi$ by  \[ \lipsize{\phi} = \inf\left\{ C \geq 0 : \|\phi(x) - \phi(y)\| \leq C\|x-y\| \text{ for all } x,y \in \{0,1\}^n \right\}, \] 
 	and define the \emph{dissimilarity} between two boolean functions $f$ and $g$ by
 	 	\[\dlip(f,g) := \inf \left\{ \lipsize{\phi} \cdot \lipsize{\phi^{-1}} : \phi \text{ is a mapping from $f$ to $g$} \right\}. \]
 
 	The 2-bi-Lipschitz mapping between $\dic$ and $\xor$  we saw earlier implies that $\dlip(\dic, \xor) \leq 4$, and Theorem \ref{thm:random} states that, if $f$ and $g$ are uniformly random balanced boolean functions, $\dlip(f,g) = O(1)$ with high probability. On the other hand we know that any mapping from $\dic$ to $\maj$ is not $C$-Lipschitz for any $C < n/2$ so $\dlip(\dic, \maj) = \Omega(n)$. Rao and Shinkar \cite{RAO2018} give a $O(1)$-Lipschitz mapping from $\maj$ to $\dic$, so we can strengthen this to $\dlip(\dic, \maj) = \Theta(n)$. In general we know that every mapping is $n$-Lipschitz which gives a trivial upper bound of $n^2$, but is it possible to do better?

 	\begin{problem}
 		Is there a constant $\alpha < 2$ such that $\dlip(f,g) = O(n^\alpha)$ for all balanced boolean functions $f$ and $g$?
 	\end{problem}

	Finally, we note that the paper by Rao and Shinkar is motivated in part by a problem from Benjamini, Cohen and Shinkar in \cite{Benjamini2016} which remains open and seems very interesting.
	
	\begin{problem}
		Is there a set $A \subseteq \{0,1\}^{n}$ of size $|A| = 2^{n-1}$ such that any bijection from $\{0,1\}^{n-1}$ to $A$ has average stretch $\omega(1)$?
	\end{problem}

\subsubsection*{Acknowledgements}
	We would like to thank an anonymous referee for their helpful comments.
	\bibliographystyle{abbrv-bold}
	\bibliography{boolean_functions}
	
\end{document}